\documentclass[12pt]{article}
\usepackage{amsmath,amsthm,amsfonts,amssymb,times,hyperref}
\usepackage{xcolor}

\newtheorem{theorem}{Theorem}
\newtheorem{proposition}{Proposition}
\newtheorem{lemma}{Lemma}

\newtheorem*{utheorem}{Theorem}

\theoremstyle{definition}

\allowdisplaybreaks

\title{Small sums of roots of unity}
\author{
Alexandros Kalogirou \\
Erd\H{o}s Center \\
R\'enyi Institute \\
Budapest, H-1053 \\
Email: kalogira@email.sc.edu}

\newcommand{\Q}{\mathbb{Q}}
\newcommand{\Z}{\mathbb{Z}}

\begin{document}
\maketitle
\begin{abstract}
    We address the question of how small a non-vanishing sum of $N$-th roots of unity with $k$ terms can be. We show upper bounds of the shape $N^{-\alpha_k}$, where $\alpha_k\rightarrow\infty$ with $k$. We also address the question of improving these bounds for a positive proportion of $N$. 
\end{abstract}
\section{Introduction}
Let 
\begin{equation}\label{def}
    f(k,N)=\min\left\{|S|: 0\neq S=\sum_{i=1}^k \zeta_i, \text{ with  }\zeta_i^N=1 \text{ for }1\leq i\leq k \right\}.
\end{equation}
In other words, $f(k,N)$ is the modulus of the smallest non-zero sum of $k$ $N$-th roots of unity. The modulus is taken over the complex numbers.

We will be concerned with upper bounds for $f(k,N)$, for all $k$, and primarily under the assumption that $N$ is large and tending to infinity. 

This question can be traced back to Myerson \cite{myerson}, who established that 
\begin{equation}
    k^{-N}\leq f(k,N)\leq N^{-k/4+o_N(1)},
\end{equation}
in the case that both $k$ and $N$ are even. While exact values of $f(k,N)$ are known when $k\in{2,3,4}$ and general $N$, it is for the greater part unknown whether one can achieve a bound of the form $f(k,N)=O(N^{-\alpha})$, with $\alpha>1$, and for all $N$. A notable exception and contribution is that of Barber \cite{benb}, who addressed the case $k=5$, showing in his Theorem 1 that $f(5,N)=O(N^{-4/3})$. The work in \cite{benb} also addresses improvements to the exponent $4/3$ for certain sparse values of $N$ and at the same time provides many useful charts from computations of $f(5,N)$ up to 221000. It also acted as a motivation for the results presented here, giving an excellent summary of the admittedly limited literature on the subject, for instance the methods of Myerson that achieve the bound $f(k,N)=O(N^{-k/2})$ for $N$ even and small even $k$.

Our results have a double goal. One is to show that for general $N$ one gets bounds of the shape $N^{-\alpha_k}$, where for $k\geq 4$, we have $\alpha_k>1$, while at the same time $\alpha_k\rightarrow \infty$ with $k$. The next goal is to show that one can achieve better exponents than $\alpha_k$ for $N$ lying in dense sets, namely multiples of certain numbers that depend on $k$.

The most singular case is that of $k=7$, which we treat separately.

\begin{theorem}
    With the same notation as in \eqref{def}, we have
    \begin{equation}\label{7result}
        f(7,N)=O(N^{-3/2}).
    \end{equation}
\end{theorem}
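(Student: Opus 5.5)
The plan is to build an explicit configuration that is symmetric under complex conjugation, so that the sum is real. Write $e(x)=e^{2\pi i x}$. I would take
\[
S=1+e(a/N)+e(-a/N)+e((b+j)/N)+e(-(b+j)/N)+e((b-j)/N)+e(-(b-j)/N),
\]
which is a sum of $1+2+4=7$ roots of unity of order $N$. By the product formula $\cos(\alpha+\delta)+\cos(\alpha-\delta)=2\cos\alpha\cos\delta$, this equals
\[
S=1+2\cos(2\pi a/N)+4\cos(2\pi b/N)\cos(2\pi j/N).
\]
The three integer parameters play separate roles. The parameter $b$ sets the main term. The parameter $a$ is a coarse adjustment, tuning $S$ to within $O(1/N)$ of $0$. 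The parameter $j$ is a fine adjustment: it enters only quadratically, which lets us use steps of size $N^{-3/2}$.

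\textbf{Coarse step.} Take $b$ to be the integer nearest to $N/3$. Then $4\cos(2\pi b/N)=-2+O(1/N)$; write $c_0:=\cos(2\pi b/N)$, so $c_0=-\tfrac12+O(1/N)$. Let
\[
A(a):=1+2\cos(2\pi a/N)+4c_0 ,
\]
so that $A(a)=2\cos(2\pi a/N)-1+O(1/N)$. Near $a\approx N/6$ the function $\cos(2\pi a/N)$ has derivative of size $\asymp 1/N$ per unit step in $a$ and is monotone there. We may therefore choose $a$ with $A(a)\in[-C/N,-C'/N]$ for suitable absolute constants $0<C'<C$; the sign is chosen negative for the next step.

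\textbf{Fine step.} For $0\le j\le J$, with $J\asymp\sqrt N$, we have
\[
S_j=A+4c_0\bigl(\cos(2\pi j/N)-1\bigr).
\]
Since $4c_0\approx-2$, the correction $4c_0(\cos(2\pi j/N)-1)\approx 4\pi^2 j^2/N^2$ is nonnegative and increasing in $j$; the exact monotonicity of $\cos(2\pi j/N)$ for $0\le j\le N/2$ is all that is needed. Thus $S_j$ increases from $S_0=A<0$ to a positive value once $j\gtrsim\sqrt{N}$; the constants $C,C'$ are chosen so that this happens with $J=O(\sqrt N)$. The consecutive increments are bounded by
\[
|S_{j+1}-S_j|\le 4|c_0|\,\bigl|\cos(2\pi (j+1)/N)-\cos(2\pi j/N)\bigr|\ll \frac{2j+1}{N^2}\ll N^{-3/2}
\]
for $j\le J$. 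Hence there is an index $j$ with $S_j\le 0<S_{j+1}$, and both of these values have modulus $O(N^{-3/2})$. Since $S_{j+1}>0$, this gives a nonzero sum of modulus $O(N^{-3/2})$. (If $S_j\neq 0$, it also works.)

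\textbf{Main obstacle.} The step needing the most care is the simultaneous control of constants. We must ensure that the coarse target interval for $A$ is reachable. We must also ensure that the quadratic correction over $j\le J=O(\sqrt N)$ actually covers $[0,|A|]$ while the step sizes stay $O(N^{-3/2})$, uniformly in $N$, including the $O(1/N)$ errors from rounding $b$ and $a$. I would handle this with explicit inequalities on the cosine, $1-\cos x\ge 2x^2/\pi^2$ and $|\cos x-\cos y|\le|x-y|\,\max(|x|,|y|)$ for $x,y\in[0,\pi/2]$, rather than asymptotic Taylor expansions. The remaining checks, that $a$, $b\pm j$ are genuine residues and that small $N$ is absorbed into the implied constant, are routine.
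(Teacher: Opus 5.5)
Your argument is correct, and it takes a genuinely different and much more elementary route than the paper's.

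The paper splits into two cases. If some vanishing $7$-term sum of $N$-th roots of unity exists, Proposition 2 applies a pigeonhole argument to the two linear forms in the first-order Taylor term, with non-vanishing justified via Conway--Jones; this gives $N^{-(2k-2)/(k+1)}=N^{-3/2}$. Otherwise, it perturbs a real vanishing configuration $1+\sum_{i=1}^3[e(\alpha_i)+e(-\alpha_i)]=0$ near $(\tfrac13,\tfrac16,\tfrac13)$, chosen via Lemma 2 and Schmidt's theorem so that $(s(\alpha_1)/s(\alpha_3),s(\alpha_2)/s(\alpha_3))$ is badly approximable. The transference Lemma 1 then solves the inhomogeneous linear problem to accuracy $Q^{-2}$ with $|\vec x|\le Q$, and balancing $1/(NQ^2)$ against the Taylor error $Q^2/N^2$ gives $Q\sim N^{1/2}$.

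You perturb the same base configuration $1+2c(\tfrac16)+4c(\tfrac13)=0$. Instead of controlling the linear term by Diophantine approximation, you split the doubled pair $e(\pm\tfrac13)$ symmetrically as $b\pm j$. This cancels the first-order contribution of $j$ and leaves the monotone quadratic scale $4|c_0|\bigl(1-c(j/N)\bigr)\asymp j^2/N^2$. Over $j\lesssim\sqrt N$, that scale covers the $O(1/N)$ window left by the coarse step in $a$, with increments $\ll N^{-3/2}$.

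What your route buys: a self-contained, effective proof with explicit constants, no case distinction, and no appeal to Schmidt or transference. Non-vanishing comes for free from the sign change $S_j\le 0<S_{j+1}$; this is exactly what the paper's Case 1/Case 2 split and the Conway--Jones separation argument are there to handle. What the paper's route buys: a general framework for perturbing arbitrary real vanishing configurations through their linear terms, which the author says also gives, e.g., $f(11,N)=O(N^{-5/3})$. Your symmetric-splitting trick is tailored to this configuration, though it appears to iterate, with each further symmetric pair supplying a finer quadratic scale.
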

This will be proved in the second section of this paper.

Next, we have
\begin{theorem}
    We have
    \begin{equation}\label{genk}
        f(k,N)=O(N^{-\lfloor \frac{\log k}{\log 6} \rfloor}).
    \end{equation}
\end{theorem}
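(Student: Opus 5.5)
Writing $m=\lfloor \log k/\log 6\rfloor$, so that $6^m\le k$, the plan is to build the sum by a product construction. I will find a sum $A$ of $6$ roots of unity that is $O(1/N)$ and nonzero, and then multiply $m$ such sums together. The difference structure is what makes this work. If $A=\sum_{i=1}^3\omega_i-\sum_{i=1}^3\omega_i'$, then a product of $m$ such factors expands into $6^m$ signed roots of unity. Since $-1$ need not be an $N$-th root of unity when $N$ is odd, I will avoid minus signs altogether. For each factor I use a sum $A_j=\sum_{i=1}^6 \eta_{j,i}$ of six $N$-th roots with $0<|A_j|\ll 1/N$, built as a small perturbation of a vanishing configuration. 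The product $\prod_{j=1}^m A_j$ then expands into a sum of $6^m$ terms, each a product of $N$-th roots of unity and hence itself an $N$-th root of unity. Its modulus is $\prod|A_j|\ll N^{-m}$, and it is nonzero because each factor is nonzero. To pass from $6^m$ terms to exactly $k$ terms, I pad with $k-6^m$ extra terms that vanish. This requires some care when $N$ is odd, but $k-6^m$ can be written using zero sums only in special cases. So instead I will choose the construction so that every $k\ge 6^m$ is achievable, for example by using blocks of sizes $6$ and $7$ in the last factor, or by absorbing the excess into a constant-size zero-free adjustment. Alternatively, $f(k,N)$ can be handled directly by allowing the last factor to have $6+r$ terms.

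The key step is the base factor: for every $N$ I need six $N$-th roots of unity whose sum is nonzero and $O(1/N)$. The natural idea is to take the vanishing sum $1+\omega+\omega^2$ of cube roots, whose exponents correspond to $0,N/3,2N/3$, which are generally not integers. I would instead approximate it with two rotated copies that cancel the first-order error. Take exponents $a_1,a_2,a_3$ and $b_1,b_2,b_3$ near $0,N/3,2N/3$ with integer rounding errors $\epsilon_i,\delta_i\in\{0,\pm1/3,\pm 2/3\}$ (scaled by $2\pi/N$), and choose the second triple rotated so that the linear terms $\sum \epsilon_i\omega^i$ and $\sum \delta_i\omega^i$ cancel. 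The sum is then $O(N^{-2})$ at second order, which is even better, and it would suffice to get $O(N^{-1})$. The simplest route to $O(1/N)$ is actually easier still: pick any three $N$-th roots closest to the cube roots of unity, each within angle $\pi/N$; their sum is $O(1/N)$. This needs only three terms, but nonvanishing must then be ensured. So I will use six terms, two such triples, in order to have enough freedom to guarantee nonvanishing, for instance by rotating one triple by $e^{2\pi i/N}$ while keeping the other fixed, so that the sum changes by a nonzero amount.

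The main obstacle is the nonvanishing of each factor together with the exact term count. If a product factor vanishes, the whole construction collapses. I expect to settle this by a case analysis on $N \bmod 6$. Vanishing sums of six roots of unity with small support are classified (they are unions of rotated $p$-gons for $p\in\{2,3\}$ or a pentagon-type relation), so I will check that the chosen perturbed configuration avoids these. For the padding to exactly $k$ terms, I will replace one factor by a sum of $6+r$ roots, still $O(1/N)$ and nonzero, obtained by adding $r$ roots near a vanishing configuration, which is possible using the same perturbation idea with $r$ extra nearly-cancelling terms.
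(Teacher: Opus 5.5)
Your product construction handles $k=6^m$, but the passage to exactly $k$ terms has a genuine gap. A product of sums with $n_1,\dots,n_m$ terms has exactly $n_1\cdots n_m$ terms. Replacing the last factor by one with $6+r$ (or $7$) terms therefore gives $6^{m-1}(6+r)$ terms, which equals $k$ only when $6^{m-1}\mid k$.

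For $k=37$ (so $m=2$) nothing works. In general, a product of two or more factors, each with at least two terms (as each $O(1/N)$ factor must have), never has a prime number of terms. Your other two escapes also fail. As you note, padding with vanishing sums is impossible in general: for $N$ prime, every vanishing sum of $N$-th roots of unity has at least $N$ terms. A nonzero ``adjustment'' $E$ of $k-6^m$ extra roots would itself need $|E|\ll N^{-m}$, which is the original problem again. For $k-6^m=1$, $E$ is a single root of modulus $1$.

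The missing idea is to drop the single-product requirement. Use instead that a small product times an arbitrary bounded sum stays small. If $P$ has $u$ terms with $|P|\ll N^{-m}$, and $S$ is any sum of $p$ $N$-th roots of unity, then $PS$ has $up$ terms and $|PS|\le p|P|$.

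Take $P_1=(1+\zeta)^m$, with $\zeta\neq -1$ an $N$-th root nearest to $-1$. Take $P_2=(1+\omega+\omega^2)^m$, with $\omega\neq e(1/3)$ an $N$-th root nearest to $e(1/3)$. The term counts $2^m$ and $3^m$ are coprime, with Frobenius number $6^m-2^m-3^m$. So every $k\ge 6^m$ can be written as $k=2^mp+3^mq$ with $p,q\ge 0$.

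Choose nonvanishing $S_1,S_2$ with $p$ and $q$ terms, for instance copies of $1$. Then $P_1S_1+P_2S_2$ has exactly $k$ terms and modulus $O_k(N^{-m})$. If it happens to vanish, replace $S_2$ by $\rho S_2$ with an $N$-th root $\rho\neq 1$; the sum becomes $(1-\rho)P_1S_1\neq 0$. This is the paper's argument, and it makes your six-term base factor and the case analysis on $N$ mod $6$ unnecessary.

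Separately, your nonvanishing device for the base factor has a slip. Rotating one triple $T'$ by $e(1/N)$ changes the sum by $(e(1/N)-1)T'$, which is zero when $T'$ is an exact vanishing triple, as happens for $3\mid N$. Writing $1+\omega+\omega^2=(\omega^3-1)/(\omega-1)$ with $\omega^3\neq 1$ avoids this.
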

The proof of Theorem 2 will be given in section 3. The same method of proof, allows to conclude the bound $f(k,N)=O(N^{-2})$, which is sharper than \eqref{genk} when $11\neq k\leq 36$. When $k=11$, a similar proof as that of Theorem 1 gives $f(11,N)=O(N^{-5/3})$. 

It is worth noting that both proofs rely on perturbing certain configurations of roots of unity, as was introduced by Barber \cite{benb}.

Finally, the last section shows how the method of section 3 can be adapted to improve on the two theorems in special cases.

\section{Proof of Theorem 1}
Our guide will be some classical results on inhomogeneous Diophantine approximation. While these results are already accounted for in the classic book of Cassels \cite{cassels}, we draw convenient references from the survey article of Bugeaud and Laurent \cite{transfer}.

Starting with the notation, let $|\vec{x}|=\max_{i=1}^n\{|x_i| \}$, for a vector $\vec{x}=(x_1,\dots, x_n)\in \mathbb{R}^n$. Also define
\begin{equation*}
    \| \vec{x} \|=\inf_{\vec{y}\in \Z^n}|\vec{x}-\vec{y}|.
\end{equation*}
Given a matrix $A=(a_{i,j})$ with $n$ rows and $m$ columns, consider the following linear forms defined by the columns of $A$:
\begin{equation}
    M_j(\vec{y})=\sum_{i=1}^n a_{i,j}y_i,\,\,\, \vec{y}=(y_1,\dots,y_n)   ,\,\,\, (1\leq j \leq m).
\end{equation}
We define
\begin{equation}
    M(\vec{y})=\max_{1\leq j \leq m}\| M_j(\vec{y}) \|=\|A^T\vec{y}\|.
\end{equation}
Finally, let 
\begin{equation}
    L_i(\vec{y})=\sum_{j=1}^m a_{i,j}x_j,\,\,\, \vec{x}=(x_1,\dots,x_m)   ,\,\,\, (1\leq i \leq n),
\end{equation}
be the linear forms determined by the rows of the matrix $A$. 

The following is almost verbatim Lemma 3 of \cite{transfer}.
\begin{lemma}
    Let $\kappa=2^{1-m-n}((m+n)!)^2$. Let $X$ and $Y$ be two positive real numbers. Suppose that we have the lower bound
    \begin{equation}
        M(\vec{y})\geq \kappa X^{-1},
    \end{equation}
    for any non-zero $\vec{y}\in\Z^n$ with $|\vec{y}|\leq Y$. Then for all real vectors $\vec{\theta}=(\theta_1,\dots,\theta_n)\in\mathbb{R}^n$, there exists $\vec{x}\in\Z^m$ with $|\vec{x}|\leq X,$ such that 
    \begin{equation}\label{inhom}
        \max_{1\leq i\leq n} \|L_i(\vec{x})-\theta_i   \|\leq \kappa Y^{-1}.
    \end{equation}
\end{lemma}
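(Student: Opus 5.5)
This is the classical transference theorem of Khintchine type (Cassels, Chapter V, Theorem VI). I would prove it with the geometry of numbers: Minkowski's second theorem applied to a suitable convex body, followed by Mahler's theorem on compound/polar bodies. The goal is to convert "the dual homogeneous system has no small solutions" into "every inhomogeneous target is well approximated".

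\emph{Step 1: the convex body.} Work in $\mathbb{R}^{m+n}$ with variables $(\vec{x},\vec{z})\in\mathbb{R}^m\times\mathbb{R}^n$. Consider the body
\[
 K=\{(\vec{x},\vec{z}): |x_j|\le X,\ |L_i(\vec{x})-z_i|\le Y^{-1}\},
\]
which is symmetric and convex and has volume $2^{m+n}X^mY^{-n}$. Let $\lambda_1\le\dots\le\lambda_{m+n}$ be its successive minima with respect to $\Z^{m+n}$. The inhomogeneous statement follows from a standard fact: every point of $\mathbb{R}^{m+n}$ lies within $\frac{m+n}{2}\lambda_{m+n}K$ of a lattice point. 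To see this, take $m+n$ linearly independent lattice vectors $v_k\in\lambda_k K$ and reduce the target modulo the sublattice they span, using a fundamental parallelepiped. Apply this to the target $(\vec{0},\vec{\theta})$. The resulting $(\vec{x},\vec{z})\in\Z^{m+n}$ satisfies $|\vec{x}|\le \frac{m+n}{2}\lambda_{m+n}X$ and $\|L_i(\vec{x})-\theta_i\|\le \frac{m+n}{2}\lambda_{m+n}Y^{-1}$. So it suffices to show $\lambda_{m+n}\ll 1$, with an explicit constant. Replacing $X,Y$ by constant multiples at the end then absorbs the constants into $\kappa$.

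\emph{Step 2: bounding $\lambda_{m+n}$ via the polar body.} By Mahler's theorem, $\lambda_{m+n}(K)\lambda_1(K^*)\le (m+n)!$ up to the usual normalisation, where $K^*$ is the polar body with respect to the dual lattice, which is again $\Z^{m+n}$. Computing $K^*$ directly: a dual vector $(\vec{u},\vec{y})$ pairs with $(\vec{x},\vec{z})$ as $\vec{u}\cdot\vec{x}+\vec{y}\cdot\vec{z}$. Rewriting $\vec{z}=L(\vec{x})-\vec{w}$ gives $(\vec{u}+A^T\vec{y})\cdot\vec{x}-\vec{y}\cdot\vec{w}$. Hence $K^*$ is comparable, up to factors depending only on $m,n$, to the body
\[
 \{|\vec{y}|\le Y/c,\ |\vec{u}+A^T\vec{y}|\le X^{-1}/c\}.
\]
A nonzero integer point in $\mu K^*$ with $\mu$ small would be a nonzero $\vec{y}$ with $|\vec{y}|\le Y$ and $M(\vec{y})<\kappa X^{-1}$. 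The case $\vec{y}=0$ is excluded because it forces $\vec{u}=0$ when $X\ge 1$; otherwise handle $\vec{u}$ small directly. This contradicts the hypothesis, so $\lambda_1(K^*)\gg 1$ and therefore $\lambda_{m+n}(K)\ll 1$.

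\emph{Main obstacle.} The difficulty is purely the bookkeeping of constants. The sharp value $\kappa=2^{1-m-n}((m+n)!)^2$ comes from combining Minkowski's second theorem ($\lambda_1\cdots\lambda_{m+n}\mathrm{vol}\le 2^{m+n}$), the lower bound $\mathrm{vol}(K)\mathrm{vol}(K^*)\ge 4^{m+n}/(m+n)!$ behind Mahler's inequality, and the factor $\frac{m+n}{2}$ from Step 1. I would follow the bookkeeping in Cassels (and in \cite{transfer}, Lemma 3) rather than re-derive it, since only the shape of the bound, with some constant depending on $m$ and $n$, matters for the application to Theorem 1.
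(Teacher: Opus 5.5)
Your outline is correct and is the standard geometry-of-numbers argument behind this lemma. It combines Mahler's inequality $\lambda_{m+n}(K)\lambda_1(K^*)\le (m+n)!$ with the covering bound $\frac{m+n}{2}\lambda_{m+n}$. The paper does not prove the lemma; it cites it as Lemma 3 of Bugeaud--Laurent, which goes back to Cassels.

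The constant also works out. Replace $X$ by $X/\kappa$ and use the exact polar $K^*=\{X\kappa^{-1}\sum_j|u_j+M_j(\vec y)|+Y^{-1}\sum_i|y_i|\le 1\}$. This gives $\lambda_1(K^*)\ge 1$, hence the lemma for any $\kappa\ge \frac{(m+n)\,(m+n)!}{2}$, which the stated $\kappa$ satisfies.

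Two small fixes:
\begin{itemize}
\item Aim at the target $(\vec 0,-\vec\theta)$ rather than $(\vec 0,\vec\theta)$.
\item The $\vec y=0$ case is vacuous: when $Y\ge 1$ the hypothesis forces $X\ge 2\kappa$, and when $Y<1$ the lemma is trivial.
\end{itemize}
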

It is very crucial to us, that the conclusion of the lemma holds uniformly for all vectors $\vec{\theta}$.

In what follows, we will be using the following convenient abbreviations.
\begin{align*}
    e(x)&=e^{2\pi i x},\\
    c(x)&=\cos(2\pi x),\\
    s(x)&=\sin(2\pi x).
\end{align*}

The above lemma is the key ingredient in proving our next Proposition 1. Before doing that, we record here a lemma from Calculus.
\begin{lemma}

Suppose that $x,y,z$ satisfy
\[
1+2c(x)+2c(y)+2c(z)=0,
\]
where
\[
x\in\left[\frac13,\frac13+\delta\right],\qquad
y\in\left[\frac16,\frac16+\delta\right],
\]
with $\delta>0$ small, and $z$ is chosen in $[0,\frac12]$. Then, the map
\[
F(x,y)=\left(\frac{s(x)}{s(z)},\,\frac{s(y)}{s(z)}\right)
\]
has image containing an open $\mathbb{R}^2$ ball of $(1,1)$.
\end{lemma}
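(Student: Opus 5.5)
The plan is to show that $F$ is a local diffeomorphism at the base point $(x,y)=(\tfrac13,\tfrac16)$ and then apply the inverse function theorem.

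\emph{Step 1: the base point.} First I check the base point. At $x=\tfrac13$, $y=\tfrac16$ we have $c(x)=-\tfrac12$ and $c(y)=\tfrac12$. The constraint then forces $c(z)=-\tfrac12$, and with $z\in[0,\tfrac12]$ this gives $z=\tfrac13$. Since $s(\tfrac13)=s(\tfrac16)=\tfrac{\sqrt3}{2}$, we get $F(\tfrac13,\tfrac16)=(1,1)$.

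\emph{Step 2: $z$ as a smooth function of $(x,y)$.} Next I make $z$ a smooth function of $(x,y)$ near the base point. This follows from the implicit function theorem applied to
\[
G(x,y,z)=1+2c(x)+2c(y)+2c(z),
\]
because $\partial_z G=-4\pi s(z)\neq 0$ at $z=\tfrac13$. Since $s(z)>0$ there, $F$ is smooth near the base point. Differentiating the constraint gives
\[
s(x)\,dx+s(y)\,dy+s(z)\,dz=0,
\]
so $dz=-(dx+dy)$ at the base point.

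\emph{Step 3: the Jacobian.} I then compute the Jacobian of $F$ at the base point. Write $s=\tfrac{\sqrt3}{2}$ and use $c'=-2\pi s$, $s'=2\pi c$. This gives
\[
dF_1=\frac{2\pi}{s}\bigl(c(x)\,dx-c(z)\,dz\bigr)=\frac{\pi}{s}(-2\,dx-dy),\qquad
dF_2=\frac{2\pi}{s}\bigl(c(y)\,dy-c(z)\,dz\bigr)=\frac{\pi}{s}(-dx).
\]
So the Jacobian is $\frac{\pi}{s}\begin{pmatrix}-2&-1\\-1&0\end{pmatrix}$, whose determinant is $-\pi^2/s^2\neq0$.

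\emph{Step 4: inverse function theorem.} By the inverse function theorem, $F$ restricted to a small neighbourhood $U$ of $(\tfrac13,\tfrac16)$ is a diffeomorphism onto an open set. In particular, $F$ maps the open square $(\tfrac13,\tfrac13+\delta)\times(\tfrac16,\tfrac16+\delta)$, for $\delta$ small, onto a nonempty open subset of $\mathbb{R}^2$. Any point of that open set has an open ball around it inside the image.

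\emph{Main obstacle: reading the conclusion correctly.} The one real subtlety is how to interpret ``open ball of $(1,1)$''. Because $x$ and $y$ are only allowed to increase from the base point, the image near $(1,1)$ is asymptotically the cone spanned by the directions $(-2,-1)$ and $(-1,0)$. That cone is the image of the positive quadrant under the Jacobian. So the image cannot contain a full ball centred at $(1,1)$.

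I will therefore prove the statement in the form actually needed later: for every $\epsilon>0$, the image contains an open ball lying within distance $\epsilon$ of $(1,1)$. Such a ball sits inside that cone and has $(1,1)$ on or near its boundary. This follows because $F$ is open on the interior of the square and the square's closure contains the base point.

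If the later application genuinely requires a ball centred at $(1,1)$, I would instead enlarge the domain to $x\in[\tfrac13-\delta,\tfrac13+\delta]$ and $y\in[\tfrac16-\delta,\tfrac16+\delta]$. The same nonzero determinant then gives a full neighbourhood of $(1,1)$ directly.
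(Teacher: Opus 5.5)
Your proof is correct and follows the paper's argument: the implicit function theorem for $z$ at $(\tfrac13,\tfrac16,\tfrac13)$, the same Jacobian $\frac{2\pi}{\sqrt3}\begin{pmatrix}-2&-1\\-1&0\end{pmatrix}$ with determinant $-4\pi^2/3$, and then the inverse function theorem. Your caveat about the one-sided intervals is also right---the paper's proof tacitly uses a two-sided neighbourhood of $(\tfrac13,\tfrac16)$ to get a ball centred at $(1,1)$, while the stated domain only gives open balls in the image arbitrarily close to $(1,1)$, which is all Proposition~1 needs.
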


\begin{proof}
Let
\[
G(x,y,z)=1+2c(x)+2c(y)+2c(z).
\]
At the point $(x_0,y_0,z_0)=\left(\frac13,\frac16,\frac13\right)$,

we have
\[
G(x_0,y_0,z_0)=0,
\]
and
$\frac{\partial G}{\partial z}=-4\pi s(z),$ which gives $\frac{\partial G}{\partial z}(x_0,y_0,z_0)
=-2\pi\sqrt3\neq0.$ Hence, by the Implicit Function Theorem, there is a neighborhood of
$\left(\frac13,\frac16\right)$ on which $z=z(x,y)$ is a smooth function
satisfying
\begin{equation}\label{implicit}
G(x,y,z(x,y))=0.
\end{equation}

We therefore regard
\[
F(x,y)
=
\left(
\frac{s(x)}{s(z(x,y))},
\frac{s(y)}{s(z(x,y))}
\right)
\]
as a smooth map of the two independent variables $x,y$. After solving \eqref{implicit} for $z_x$ and $z_y$, we find the Jacobian of the map $F(x,y)$ at the point $(1/3,1/6)$ to be

\[
JF\!\left(\frac13,\frac16\right)
=
\begin{pmatrix}
-\dfrac{4\pi}{\sqrt3} & -\dfrac{2\pi}{\sqrt3}\\[1.2ex]
-\dfrac{2\pi}{\sqrt3} & 0
\end{pmatrix},
\]
whose determinant is
\[
\det JF\!\left(\frac13,\frac16\right)
=
-\frac{4\pi^2}{3}\neq0.
\]

 Using the Inverse Function Theorem again,
$F$ is a local diffeomorphism at this point. Therefore the image of $F$ contains an open neighborhood of $F(1/3,1/6)=(1,1)$
\end{proof}
\begin{proposition}
    There exist $\alpha_1,\alpha_2,\alpha_3\in \mathbb{R}$, such that 
    \begin{equation}\label{zero-sum}
        1+\sum_{i=1}^3 \left[e(\alpha_i)+e(-\alpha_i)\right]=0,
    \end{equation}
    which satisfy the following. Let $L(x_1,x_2)=x_1s(\alpha_1)/s(\alpha_3)+x_2s(\alpha_2)/s(\alpha_3)$. Then for any $\theta\in \mathbb{R}$, one can find a vector $\vec{x}$ with $|\vec{x}|\leq X$, such that
    \begin{equation}
        \|L(\vec{x})-\theta \|\ll_{\vec{\alpha}} X^{-2}.
    \end{equation}
\end{proposition}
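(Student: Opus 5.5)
We will prove the proposition by combining Lemma 2, which produces many admissible triples $(\alpha_1,\alpha_2,\alpha_3)$, with Lemma 1 in the case $n=1$, $m=2$, which turns a homogeneous Diophantine condition on the coefficients of $L$ into the uniform inhomogeneous estimate we need.

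The choice of $\vec\alpha$ goes as follows. Note that \eqref{zero-sum} is exactly $1+2c(\alpha_1)+2c(\alpha_2)+2c(\alpha_3)=0$. We take $\alpha_1=x$ and $\alpha_2=y$ near $(1/3,1/6)$, and let $\alpha_3=z(x,y)$ be the implicit solution of Lemma 2. Lemma 2 then says that the pair
\[
(a_1,a_2)=\left(\frac{s(\alpha_1)}{s(\alpha_3)},\frac{s(\alpha_2)}{s(\alpha_3)}\right)
\]
ranges over an open ball $B$ around $(1,1)$ as $(x,y)$ varies. So it suffices to find a single point $(a_1,a_2)\in B$ with a suitable Diophantine property and pull it back to some $\vec\alpha$.

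The property we need is \emph{badly approximable}. Concretely, we want a constant $c>0$ such that
\[
\|y a_1\|\ge c|y|^{-1/2}\quad\text{or}\quad \|y a_2\|\ge c|y|^{-1/2}
\]
for every nonzero integer $y$. Equivalently, $\max(\|ya_1\|,\|ya_2\|)\ge c|y|^{-1/2}$, which says that $(a_1,a_2)$ is a badly approximable vector in the simultaneous sense. Such vectors exist in every open subset of $\mathbb{R}^2$. There are two standard routes:
\begin{itemize}
\item the set of badly approximable pairs has full Hausdorff dimension, by Schmidt's theorem;
\item more concretely, one can take $(a_1,a_2)=(\beta,\beta^2)$, or any $\Q$-basis pair of a real cubic field together with $1$. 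By Perron/Cassels these are badly approximable, and since $\Q$-linear images of a badly approximable vector remain badly approximable, and such images are dense, we can arrange that the pair lies in $B$.
\end{itemize}
Choosing this point and pulling back through $F$ fixes $\vec\alpha$.

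The main step is to apply Lemma 1 with $n=1$, $m=2$. Here $A=(a_1\ a_2)$, so $M(y)=\max_j\|ya_j\|$ and $L_1(\vec x)=L(\vec x)$. Given $X$, we set $Y=(X/C)^2$ for a suitable constant $C$. For $1\le|y|\le Y$, bad approximability gives
\[
M(y)\ge c|y|^{-1/2}\ge cY^{-1/2}=cC X^{-1}.
\]
Choosing $C$ with $cC\ge\kappa$ makes this at least $\kappa X^{-1}$. Lemma 1 then yields, for every $\theta$, a vector $\vec x\in\Z^2$ with $|\vec x|\le X$ and
\[
\|L(\vec x)-\theta\|\le\kappa Y^{-1}\ll X^{-2},
\]
uniformly in $\theta$, which is exactly the claim.

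The main obstacle is justifying that a badly approximable pair can be placed inside the specific open set $B$. The openness provided by Lemma 2 is precisely what makes this possible. The remaining steps are routine bookkeeping of constants.
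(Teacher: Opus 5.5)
Your proposal is correct and follows the paper's proof. Lemma 2 supplies an open set of admissible pairs $(s(\alpha_1)/s(\alpha_3),s(\alpha_2)/s(\alpha_3))$, Schmidt's result places a badly approximable pair in it, and Lemma 1 with $n=1$, $m=2$ and $Y\asymp X^2$ gives the uniform bound $\|L(\vec{x})-\theta\|\ll X^{-2}$; your version only makes the constants explicit. In your optional cubic-field route, restrict to invertible rational changes of basis, so that $1,b_1,b_2$ is again a $\Q$-basis of the field and Cassels' theorem applies, because an arbitrary $\Q$-linear image of a badly approximable pair, e.g. $(a_1,a_1)$, need not be badly approximable.
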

\begin{proof}
     By Lemma 1, it is enough to find a 2-tuple $A=(s(\alpha_1)/s(\alpha_3),s(\alpha_2)/s(\alpha_3))$, such that
    \begin{equation}\label{sufficient}
        \max_{1\leq i\leq 2}\{\| s(\alpha_i)/s(\alpha_3)y \|\}\geq c Y^{-1/2},
    \end{equation}
    whenever $y$ is an integer with $|y|\leq Y$. The seminal work of Schmidt \cite{schmidt} has as consequence that points $(t_1,t_2)$ satisfying
    $$ \max_{1\leq i\leq  2}\{\| t_iy\|\}\geq c_{t_1,t_2} Y^{-1/2}  $$
    for $y\in \Z$ with $|y|\leq Y$, exist withing any open ball of $\mathbb{R}^2$. By Lemma 2, we see then that such a point $(t_1,t_2)$ can be made to coincide with $(s(\alpha_1)/s(\alpha_3),s(\alpha_2)/s(\alpha_3))$ for a vector $(\alpha_1,\alpha_2,\alpha_3)$ that satisfies \eqref{zero-sum}.
\end{proof}
Before we present the proof of Theorem 1, a few comments are in order. We will use the same strategy as in \cite{benb}, i.e. finding small non-zero sums by perturbing sums which are already vanishing. 

A potential pitfall in this strategy is that we need to ensure that the new sum is non-vanishing. A more ad hoc reasoning for how this can be avoided can be attempted for the case of small sums,
$$\sum_{i=1}^k e(x_i), \text{  with  } x_i\in\Q  $$
as say, Barber did with the case $k=5$. When perturbing rational solutions to the above equation, the following result of Conway and Jones provides an answer.
\begin{utheorem}
    The equation
    $$  \sum_{i=1}^k x_i=0 $$
    has only finitely many solutions up to rotations, with the $x_i$'s being roots of unity, if we assume that none of the subsums are vanishing.
\end{utheorem}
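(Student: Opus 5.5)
The statement to prove is the Conway--Jones theorem. I would argue by strong induction on $k$, with $N$ the order of the group generated by the ratios $x_i/x_1$. After rotating so that $x_1=1$, every $x_i$ is an $N$-th root of unity. The aim is to show that minimality (no vanishing proper subsum) forces $N$ to be squarefree and each prime divisor $p$ of $N$ to satisfy $p\le k$. There are only finitely many such $N$ for a given $k$, and hence only finitely many tuples, up to rotation.

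\textbf{Step 1 (removing a prime).} Let $p$ be a prime dividing $N$ and write $N=pM$. Every $N$-th root of unity can be written as $\zeta_N^{j}\eta$ with $\eta$ an $M$-th root of unity and $0\le j<p$ (or $j<p^a$ when $p^a\,\|\,N$). Grouping terms gives $0=\sum_j \zeta_N^{j} A_j$, where each $A_j\in\Q(\zeta_M)$. I would use the minimal polynomial of $\zeta_N$ over $\Q(\zeta_M)$. If $p^2\mid N$, this polynomial is $X^p-\zeta_N^{p}$, so the powers $1,\zeta_N,\dots,\zeta_N^{p-1}$ are linearly independent over $\Q(\zeta_M)$. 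Hence each $A_j$ vanishes, and each $A_j$ is a subsum of the original sum. Minimality then forces all terms into a single class, which contradicts the assumption that $p^2\mid N$ is actually needed to generate the ratios. So $N$ is squarefree.

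\textbf{Step 2 (bounding $p$).} Now let $p\,\|\,N$. The only relation is $1+\zeta_p+\dots+\zeta_p^{p-1}=0$ over $\Q(\zeta_M)$, so $A_0=A_1=\dots=A_{p-1}$. If some $A_j$ were zero, all of them would be, and minimality would put the whole sum in one class, which is impossible. So every $A_j$ is nonzero. Each of the $p$ classes must then be nonempty, giving $p\le k$. This bounds the primes by $k$, and $N\le\prod_{p\le k}p$, which proves finiteness.

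\textbf{Main obstacle.} The delicate point is Step 1's claim about linear independence over $\Q(\zeta_M)$ when $p^2\mid N$. This needs $[\Q(\zeta_N):\Q(\zeta_M)]=p$, which follows from comparing $\varphi(N)=p\,\varphi(M)$. I would also check carefully that "all terms in one class" genuinely contradicts the definition of $N$ in both cases. This is where the rotation normalisation $x_1=1$ is essential. The rest is bookkeeping.
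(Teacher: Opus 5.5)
The paper does not prove this statement. It quotes it from Conway and Jones and uses it as a black box, so there is no proof to compare against. Your argument is a correct, self-contained proof, and it is essentially the standard argument behind the Conway--Jones theorem.

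The steps all check out:
\begin{itemize}
\item You normalize $x_1=1$ and take $N$ minimal with all $x_i\in\mu_N$.
\item You split $\mu_N$ into cosets of $\mu_{N/p}$.
\item You use the minimal polynomial of a coset representative over $\Q(\zeta_{N/p})$. This is $X^p-\zeta_N^p$ when $p^2\mid N$, and $\Phi_p$ when $p\,\|\,N$.
\item In the first case every class sum must vanish. Minimality then puts all terms in the class of $x_1=1$, that is, in $\mu_{N/p}$, contradicting the minimality of $N$.
\item In the second case the class sums are all equal, and by the same reasoning they are nonzero. So all $p$ classes are occupied and $p\le k$.
\end{itemize}
Both degree computations are right: $\varphi(N)=p\,\varphi(M)$ when $p\mid M$, and $\varphi(pM)=(p-1)\varphi(M)$ when $p\nmid M$. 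Conway and Jones extract more from the same decomposition, namely squarefreeness together with $\sum_{p\mid N}(p-2)\le k-2$, which describes the minimal sums quite precisely. Your cruder conclusion $N\mid\prod_{p\le k}p$ is exactly what the finiteness claim needs, and it is the only consequence the paper relies on.

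A few cosmetic points:
\begin{itemize}
\item No induction on $k$ is actually used; the argument is direct.
\item The parenthetical about $j<p^a$ is unnecessary, since $j<p$ already works with $N=pM$.
\item In Step 2 you should take representatives $\zeta_p^j$ rather than $\zeta_N^j$. This is legitimate because $\gcd(p,M)=1$ gives $\mu_N=\mu_p\,\mu_M$.
\item You should say explicitly that each term's class is well defined, because distinct representatives $j<p$ lie in distinct cosets of $\mu_M$.
\end{itemize}
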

We see then that we can avoid the vanishing of the new sum if the perturbations are such that for big enough $N,$ the resulting vectors form angles that fall into neighborhoods of the angles of the original vanishing sum, whose diameters tend to zero, while being different from their original values. That is because by the above theorem, the angles of different configurations of vanishing sums $\sum_{i=1}^k e(x_i)=0,$ are separated.

Since we are forced to also consider real solutions to the equation $\sum_{i=1}^7 e(x_i)=0$ to prove Theorem 1, we need to split the argument into two cases.

\textit{Case 1.} The equation $\sum_{i=1}^7 e(x_i/N)$ has a solution $(x_1,\dots,x_7)\in \Z^7$.

\textit{Case 2.} The same equation does not have any integer solutions.

\textit{Case 1} is covered by the following proposition.
\begin{proposition}
    Suppose that there exist integer solutions $x_1,\dots,x_k$ to $\sum_{i=1}^ke(x_i/N)=0$. Then
    $$ f(k,N)\ll 1/N^{\frac{2k-2}{k+1}}  .$$
\end{proposition}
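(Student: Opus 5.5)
Starting from a vanishing sum $\sum_{i=1}^k e(x_i/N)=0$ with $x_i\in\Z$, I would perturb each exponent by a small integer: $S=\sum_{i=1}^k e((x_i+h_i)/N)$ with $h_i\in\Z$, $|h_i|\le H$. The aim is to pick $(h_1,\dots,h_k)$ so that $0<|S|\ll N^{-(2k-2)/(k+1)}$. Taylor expansion in $h_i/N$ gives
\[
S=\sum_{i=1}^k e(x_i/N)\Big(\frac{2\pi i h_i}{N}-\frac{2\pi^2h_i^2}{N^2}+O\big(H^3N^{-3}\big)\Big).
\]
The zeroth-order term vanishes by hypothesis. The remaining terms are controlled by two real linear conditions and two real quadratic conditions on $\vec h$. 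The first-order term is $\frac{2\pi i}{N}\sum h_i e(x_i/N)$, a complex number with two real coordinates. The second-order term is $-\frac{2\pi^2}{N^2}\sum h_i^2 e(x_i/N)$.

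\textbf{Step 1: kill the first-order term exactly.} Let $\Lambda=\{\vec h\in\Z^k:\sum h_ie(x_i/N)=0\}$. Since $\vec h=\vec 1$ already lies in $\Lambda$, the lattice is nontrivial. Its rank is $k$ minus the dimension of the $\Q$-span of the $e(x_i/N)$ in $\mathbb C$, and that dimension can be large. So I would work with near-vanishing instead of exact vanishing.

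\textbf{Step 2 (main step): pigeonhole.} The map $\vec h\mapsto (N\cdot\sum h_ie(x_i/N),\ \sum h_i^2 e(x_i/N))\in\mathbb R^4$ sends $(2H+1)^k$ points into a box of size $\ll NH\times NH\times H^2\times H^2$. Quadratic terms do not pigeonhole linearly, so I would instead use $\vec h=\vec a-\vec b$ and handle the shift $t\vec 1$ separately. A cleaner device is to exploit the freedom of adding a multiple of $\vec 1$. Since $\sum e(x_i/N)=0$, the first-order term is unchanged under $\vec h\mapsto\vec h+t\vec1$. The second-order term changes by $2t\sum h_ie(x_i/N)$, which is negligible once the linear term is small. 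Pigeonholing only on the linear form $\sum h_i e(x_i/N)$ over a box of side $H$ gives $|\sum h_ie(x_i/N)|\ll H^{-(k-2)/2}\cdot H$. The target is to balance
\[
|S|\approx \frac{H^{1-(k-2)/2}}{N}+\frac{H^2}{N^2},
\]
choosing $H$ as a power of $N$. I do not expect this naive count to reproduce $\frac{2k-2}{k+1}$ directly. The exponent's shape suggests pigeonholing $k+1$ real quantities in one joint count, namely the first-order complex term, the second-order complex term, and the ambient freedom, over $(2H+1)^k$ points with $H\asymp N^{2/(k+1)}$ or similar. So I would set up the pigeonhole as counting differences in $\Z^k$ whose associated sum lands in a small complex disk. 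The cyclotomic integers $\sum h_i e(x_i/N)$ are quantized, which should give the factor $N^{-1}$ per coordinate, and the bound $|S|\ll N^{-1}\cdot H\cdot(\text{spacing})$ should produce $N^{-(2k-2)/(k+1)}$ after optimisation. Getting this count right is the step I expect to be hardest. I would first try: among the vectors $e((x_i+h_i)/N)$ with $0\le h_i\le H$, pigeonhole the $(H+1)^k$ complex sums $\sum_i e((x_i+h_i)/N)$, all of modulus $\ll kH/N$, into $\asymp (H+1)^k$ boxes of side $\delta$. This gives two sums differing by at most $\delta\asymp (H/N)H^{-k/2}$. Then I would need to turn a difference of two sums into a single sum of $k$ roots, which is exactly where the vanishing-sum structure (negation $\zeta\mapsto-\zeta$ when $N$ is even, or reuse of the zero sum) must be exploited.

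\textbf{Step 3: non-vanishing.} Following the remark after the Conway–Jones theorem, all perturbed angles lie within $H/N\to0$ of the original ones. Hence for large $N$ the new sum can vanish only if it is a rotation of the original configuration or of one of its vanishing subsums. I would rule this out by requiring $\vec h\notin\Z\vec1$, together with the same argument applied to subsums, so that $S\neq0$.
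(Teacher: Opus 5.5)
There is a genuine gap: the proposal never reaches the bound. The failure is a miscount in the pigeonhole of Step 2, after which you abandon an argument that already works. Since $\sum_i e(x_i/N)=0$, the linear form $\vec h\mapsto\sum_i h_i e(x_i/N)$ is constant on cosets of $\Z\vec 1$. Representatives in $\{0,\dots,H\}^k$ with $h_k=0$ give $(H+1)^{k-1}$ distinct classes, and their images lie in a square of side $\ll kH$ in $\mathbb{C}\cong\mathbb{R}^2$. Cut that square into fewer than $(H+1)^{k-1}$ subsquares of side $\asymp H\cdot H^{-(k-1)/2}$. Two distinct classes then land in the same subsquare, and their difference is some $\vec h\notin\Z\vec 1$ with $|h_i|\le H$ and
$|\sum_i h_i e(x_i/N)|\ll H^{(3-k)/2}$. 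Your bound $H^{1-(k-2)/2}$ is therefore wrong.

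With the correct exponent your own balancing formula finishes the proof: $|S|\ll H^{(3-k)/2}/N+H^2/N^2$, and $H\asymp N^{2/(k+1)}$ makes both terms $\asymp N^{-(2k-2)/(k+1)}$. This is exactly the paper's argument: perturb, Taylor expand to first order with an $O(H^2/N^2)$ remainder, pigeonhole the two real linear forms over cosets of $\Z\vec 1$, and balance. The $k+1$ in the exponent comes from $H^{(k+1)/2}\asymp N$, not from a joint count of $k+1$ quantities. No second-order analysis and no quantization of $\sum_i h_i e(x_i/N)$ is needed. Your fallback at the end also does not work as stated. The difference of two $k$-term sums is a signed $2k$-term sum, and you give no way to turn it back into a sum of $k$ roots of unity.

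Your Step 3 agrees with the paper's non-vanishing argument, which imposes only $\vec h\notin\Z\vec 1$. Your caveat about vanishing subsums is legitimate, and the paper passes over it. Suppose the chosen vanishing sum splits into $r\ge 2$ vanishing blocks. Then any $\vec h$ constant on each block gives $S=0$ exactly and lies in the kernel of the linear form, so the pigeonhole may return it. Excluding these vectors means working modulo the rank-$r$ lattice spanned by the block indicators. That leaves only $\asymp H^{k-r}$ classes and gives the weaker bound $N^{4/(k-r+2)-2}$. So the count above delivers $N^{-(2k-2)/(k+1)}$ when the vanishing sum can be taken with no vanishing proper subsum.
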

\begin{proof}
    Let $S_0=\sum_{i=1}^k e(x_i/N)=0 $. Consider the perturbation by $\vec{y}=\frac{1}{N}(y_1,\dots,y_k)$, with $\max_{1\leq i\leq k}\{|y_i|\}<Q$, which gives a new sum
    \begin{align}
        S&=\sum_{i=1}^k e(x_i/N+y_i/N)\notag\\
        &=\sum_{i=1}^k c(x_i/N+y_i/N)+i\left(\sum_{i=1}^k s(x_i/k+y_i/N)  \right)\notag\\
        &=\frac{-2\pi}{N}\left( \sum_{i=1}^k s(x_i/k)*y_i \right)+\frac{2\pi i}{N}\left( \sum_{i=1}^kc(x_i/k)*y_i  \right)+O\left( \frac{Q^2}{N^2} \right).\label{taylor}
    \end{align}
    The line \eqref{taylor} follows from the Taylor expansion of the functions $c(x),s(x)$ at the points $x_i/N$. Consider the two vectors $\vec{p}=\sum_{i=1}^k s(x_i/k)$ and $\vec{q}=\sum_{i=1}^k c(x_i/k)$. We want to make both inner products $\vec{p}*\vec{y}, \vec{q}*\vec{y}$, small, for some $\vec{0}\neq\vec{y}$, that is not parallel to $\Lambda=(1,1,\dots,1)$. The last requirement is placed to avoid $S$ being zero, and from our previous analysis, that is the only condition that we need to impose on $\vec{y}.$

    Take a total of $\gg Q^{k-1}$ cosets $\vec{w}+\Lambda$ of $\Z/\langle \Lambda \rangle$, whose coordinates $|y_i|$ are in absolute value at most $Q$. The values of $(\vec{p}*\vec{w},\vec{q}*\vec{w})$ are distributed in a $Q\times Q$ box. Therefore by pigeonhole, we can find two vectors $\vec{w},\vec{w'},$ such that both inner products at each vector differ by at most $c\cdot Q/Q^{\frac{k-1}{2}}.$ Their difference $\vec{y}=\vec{w}-\vec{w'}$ will be a non-diagonal vector such that
    $$  \max\{ |\vec{p}*\vec{y}|, |\vec{q}*\vec{y}|  \}\ll Q^{\frac{3-k}{2}}.  $$
    Equating 
    $$  1/(Q^{\frac{k-3}{2}}N)\approx Q^2/N^2,  $$
    we find that $S\ll 1/N^{\frac{2k-2}{k+1}}. $
\end{proof}

\begin{proof}[\textit{Proof of Theorem 1.}]
    If we are in \textit{Case 1}, the conclusion follows from Proposition 2. Therefore we assume that we are in \textit{Case 2}. We perturb the configuration of the zero sum in \eqref{zero-sum}, as follows. Let $x_i/N$ for $x_i\in \Z$, be the closest point to $\alpha_i$, among all points of the lattice $\frac{1}{N}\Z$. We then consider the points $y_i/N$, with $|y_i-x_i|\leq Q$ for $i=1,2$, while allowing the potentially bigger range $|y_3-x_3|\leq \sum_{i=1}^2 |s(\alpha_i)/s(\alpha_3)|(Q+1) $ for $i=3$. Here $Q$ is some parameter to be chosen later.

    We now consider the sum
    \begin{align}
        S&=1+\sum_{i=1}^3[e(y_i/N)+e(-y_i/N)]\\
        &=1+2\sum_{i=1}^3 c(y_i/N) \notag \\
        &=1+2\sum_{i=1}^3 c(y_i/N-x_i/N+x_i/N-\alpha_i+\alpha_i)\notag\\
        &=1+2\sum_{i=1}^3 c(\alpha_i)-4\pi\sum_{i=1}^3s(\alpha_i)\left( \frac{y_i}{N}-\frac{x_i}{N}+\frac{x_i}{N}-\alpha_i \right)+O\left(\frac{Q^2}{N^2}\right) \label{Taylor} \\
        &=-\frac{4\pi s(\alpha_3)}{N}\left[\sum_{i=1}^2\frac{s(\alpha_i)}{s(\alpha_3)}\left( y_i-x_i)  \right)+\sum_{i=1}^3\frac{s(\alpha_i)}{s(\alpha_3)}(x_i-N\alpha_i)+y_3-x_3  \right]+O\left(\frac{Q^2}{N^2}  \right)\label{errors}
    \end{align}
    The line \eqref{Taylor} comes from Taylor expanding the function $c(x)$ around the points $\alpha_i$. Setting $\theta=-\sum_{i=1}^3s(\alpha_i)/s(\alpha_3)(x_i-N\alpha_i)$, we first observe that $\theta$ is absolutely bounded by $\sum_{i=1}^2|s(\alpha_i)/s(\alpha_3)|$, regardless of the value of $N$. We therefore can bound the bracket above by
    \begin{equation}
        c\| L(y_1-x_1,y_2-x_2) -\theta \|,
    \end{equation}
    for suitable choice of $y_3-x_3$. By Proposition 1, this can be seen to be $O(1/Q^2)$. Equating the two error terms in \eqref{errors}, we find the choice of $Q\sim N^{1/2}$ to be optimal, which gives
    $$  S=O(N^{-3/2}). $$
\end{proof}
\section{Proof of Theorem 2}
Consider the sums
\begin{equation}
    (1+\zeta)^a,
\end{equation}
and
\begin{equation}\label{sum1}
    (1+\omega+\omega^2)^b,
\end{equation}
each with $2^a$ terms and $3^b$ terms accordingly. By an easy application of the Frobenius coin problem, as long as
\begin{equation}\label{sum2}
    2^a3^b\leq k,
\end{equation} 
then $k$ can be written as $k=2^ap+3^bq$ for non-negative integers $p,q$. Pick an arbitrary non-vanishing sum $S_1$ of $N$-th roots of unity with $p$ terms, and another $S_2$ with $q$ terms. Taking $\zeta$ to be an $N$-root of unity close to -1, but not equal, and $\omega$ similarly to be an $N$-th root close to, but different from $e(1/3)$, the sum
$$ (1+\zeta)^a*S_1+(1+\omega+\omega^2)^b*S_2,  $$
is of order $N^{-\min\{a,b\}}$. We can also make sure that the sum is non-zero, by rotating if necessary any of the sums $S_1$ or $S_2$ by multiplication by any $N$-root different from 1. The choice $a=b$ gives
$$ f(k,N)\ll N^{-\lfloor \frac{\log k}{\log 6} \rfloor}.  $$
\section{Improvements conditional on divisors of $N$}
We illustrate the more general idea by applying it to specific cases of interest.
\begin{proposition}
    Assume $6|N$. Then $f(5,N)\ll N^{-2}$.
\end{proposition}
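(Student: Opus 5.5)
The plan is to use the mechanism of Section 3 in a sharper form. A sum of a fixed number of $N$-th roots of unity, read as a polynomial $P(z)$ evaluated at $z=e(1/N)$, has size $O(N^{-2})$ whenever $P$ vanishes to second order at $z=1$. So I will look for a sum of exactly five terms that equals $\pm(1-z)^2$ identically in $z$. The hypothesis $6\mid N$ is exactly what lets the coefficients be sixth roots of unity: $\omega=e(1/3)$ and $-1$ are then themselves $N$-th roots of unity.

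Concretely, I take a polynomial
\[
P(z)=\sum_{i=1}^5 c_i z^{m_i},
\]
with each $c_i$ a sixth root of unity and each $m_i$ an integer, and impose
\[
P(1)=\sum c_i=0,\qquad P'(1)=\sum c_i m_i=0 .
\]
The coefficients cannot come from a single minimal vanishing sum, because then the derivative condition forces all exponents in it to be equal, and $P$ collapses to $0$. Instead I mix the triple $1+\omega+\omega^2=0$ with the pair $1+(-1)=0$, taking $c=(1,\omega,\omega^2,1,-1)$. Reducing with $\omega^2=-1-\omega$, the condition $P'(1)=0$ becomes $m_2=m_3$ and $m_1-m_3+m_4-m_5=0$, which has non-degenerate solutions. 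Choosing $m_1=1$, $m_2=m_3=0$, $m_4=1$, $m_5=2$ gives
\[
P(z)=z+\omega+\omega^2+z-z^2=-1+2z-z^2=-(1-z)^2 .
\]

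Now set $z=e(1/N)$. Then
\[
S=e(1/N)+e(1/N)+e(1/3)+e(2/3)-e(2/N)
\]
is a sum of five $N$-th roots of unity, using $3\mid N$ for $e(1/3),e(2/3)$ and $2\mid N$ for $-e(2/N)=e(1/2+2/N)$. It equals $-(1-e(1/N))^2$, which is non-zero, and
\[
|S|=|1-e(1/N)|^2=4\sin^2(\pi/N)\ll N^{-2}.
\]
Hence $f(5,N)\ll N^{-2}$.

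The only real obstacle is finding the right five-term configuration: it must avoid both the degenerate cancellation and any need for roots of unity of order not dividing $N$. Once the identity $z+\omega+\omega^2+z-z^2=-(1-z)^2$ is found, the verification is immediate. In particular, non-vanishing needs no appeal to the Conway–Jones theorem, since it is automatic for $z\neq 1$.
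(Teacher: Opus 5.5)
Your proof is correct and is essentially the paper's construction. The paper's product $(1+\zeta)(1+\zeta'\omega+\zeta'\omega^2)$ collapses to the five-term sum $(1+\zeta)(1-\zeta')$, which is $(1-e(1/N))^2$ with one term split via $1+\omega+\omega^2=0$; this matches your configuration up to rotation and complex conjugation. Your explicit identity also avoids a slip in the paper's parameter choice: with $\zeta'=e(1/2+1/N)$ as written, the second factor is $1-\zeta'=1+e(1/N)$, of size about $2$, so $\zeta'$ must be taken near $1$ (e.g.\ $\zeta'=e(1/N)$) to get $N^{-2}$.
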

\begin{proof}
    Consider the sum
    \begin{align*}(1+\zeta)(1+\zeta'\omega+\zeta'\omega^2)=1+\zeta'\omega&+\zeta'\omega^2 +\\&+\zeta+\zeta\zeta'\omega+\zeta\zeta'\omega^2. \end{align*}
    Taking $\omega=e(1/3)$, $\zeta=\zeta'=(1/2+1/N)$, we have as before a sum of order $N^{-2}$, while we can write $\zeta'\omega+\zeta'\omega^2=-\zeta'$,in other words, a sum with five terms, all of which are $N$-th roots of unity.
\end{proof}
\begin{proposition}
    Assume $10\mid N$. Then $f(7,N)\ll N^{-2}$.
\end{proposition}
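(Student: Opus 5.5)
The plan is to reuse the construction in the proof of Proposition 3, with the cube roots of unity replaced by fifth roots. There we used the relation $\omega+\omega^2=-1$ to collapse two terms into one. Here we use $\eta+\eta^2+\eta^3+\eta^4=-1$ for $\eta=e(1/5)$. Because $5\mid N$, $\eta$ is an $N$-th root of unity; because $2\mid N$, $-1$ is one as well.

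Concretely, take $\zeta=-e(1/N)$ and $\zeta'=e(1/N)$. Both are $N$-th roots of unity, $\zeta$ is close to but different from $-1$, and $\zeta'$ is close to but different from $1$. Consider the product
\[
(1+\zeta)\bigl(1+\zeta'\eta+\zeta'\eta^2+\zeta'\eta^3+\zeta'\eta^4\bigr).
\]
We expand it in a slightly unbalanced way. The first factor $1$ multiplies out to the five terms $1,\zeta'\eta,\zeta'\eta^2,\zeta'\eta^3,\zeta'\eta^4$. The factor $\zeta$ multiplies out to
\[
\zeta+\zeta\zeta'(\eta+\eta^2+\eta^3+\eta^4)=\zeta-\zeta\zeta',
\]
which is two terms. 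This gives seven terms in total. Each term is an $N$-th root of unity: $\eta\in\mu_N$ since $5\mid N$, and $-\zeta\zeta'\in\mu_N$ since $-1\in\mu_N$ (as $2\mid N$) and $\zeta,\zeta'\in\mu_N$.

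Next we evaluate the modulus. The second factor equals $1+\zeta'(-1)=1-\zeta'$, so the whole sum equals $(1+\zeta)(1-\zeta')$. Its modulus is $|1-e(1/N)|^2=4\sin^2(\pi/N)\asymp N^{-2}$. It is non-zero because $\zeta\neq-1$ and $\zeta'\neq1$, which holds as soon as $N>1$. Hence $f(7,N)\le 4\sin^2(\pi/N)\ll N^{-2}$.

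No step here is a real obstacle. The only point needing care is the bookkeeping: we must expand asymmetrically, collapsing the relation $\eta+\eta^2+\eta^3+\eta^4=-1$ only in the $\zeta$-half of the product. This keeps the count at exactly $5+2=7$ terms, rather than $10$ if we expanded fully or $4$ if we collapsed both halves. We must also check that every resulting term genuinely lies in $\mu_N$, and this is precisely where the hypothesis $10\mid N$ is used.
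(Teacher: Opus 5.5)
Your proof is correct and uses the paper's construction: the same product $(1+\zeta)(1+\zeta'\sum_{i=1}^4\eta^i)$, with $\sum_{i=1}^4\eta^i=-1$ applied in one half only. The paper collapses the other half instead, giving $1-\zeta'+\zeta+\zeta\zeta'\sum_i\eta^i$, which also works, so your asymmetric choice is not forced.

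Your parameters are in fact the ones the argument needs. The paper's stated choice $\zeta'=\zeta=e(1/2+1/N)$ makes the sum $(1+\zeta)(1-\zeta')=1-e(2/N)$, whose modulus is only of order $N^{-1}$. Taking $\zeta'=e(1/N)$ close to $1$, as you do, gives $(1-e(1/N))^2$ and the claimed $N^{-2}$.
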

\begin{proof}
    Like before, consider the sum
    \begin{align*}
        (1+\zeta)(1+\zeta'*\sum_{i=1}^4 \omega^i )=1+\zeta'*\sum_{i=1}^4\omega^i+\zeta+\zeta*\zeta'*\sum_{i=1}^4\omega^i.
    \end{align*}
    Taking $\omega=e(1/5)$, $\zeta'=\zeta=e(1/2+1/N)$, after replacing $\zeta'*\sum_{i=1}^4\omega^i=-\zeta'$, we get the conclusion.
\end{proof}
Finally, the following proposition shows a little more originality in the use of identities to exploit the cancellation coming from longer sums. Remarkably, this matches the best result of Myesron for $k=8$, and $N$ even, while using a construction that involves solving linear equations as opposed to his approach involving solutions to the Prouhet-Tarry-Escott problem.
\begin{proposition}
    Assume $2|N$. Then $f(10,N)\ll N^{-4}$.
\end{proposition}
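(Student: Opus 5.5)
The plan is to exhibit, for each even $N$, an explicit sum of ten $N$-th roots of unity that is nonzero and of size $O(N^{-4})$. Because $2\mid N$, we have $-1=e(1/2)$ as an $N$-th root of unity, so any sum $\sum \varepsilon_i z^{e_i}$ with signs $\varepsilon_i=\pm1$ and $z$ an $N$-th root of unity is again a sum of $N$-th roots of unity. This lets us work with signed sums. Concretely, I would take a fixed integer polynomial $P(x)=\sum_{i}\varepsilon_i x^{e_i}$ and evaluate it at $z=e(1/N)$. If $P$ vanishes to order $4$ at $x=1$, Taylor expansion of $t\mapsto P(e(t))$ about $t=0$ gives $P(e(1/N))=O(N^{-4})$, with the implied constant depending only on $P$.

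\textbf{The polynomial.} The exponents come from a solution of the linear system $\sum_i \varepsilon_i e_i^j=0$ for $j=0,1,2,3$. This is exactly the condition that $(1-x)^4\mid P(x)$, i.e. that $t\mapsto P(e(t))$ vanishes to order $4$ at $t=0$. A Prouhet--Tarry--Escott-type solution of size $4$ and degree $3$ works:
\[
\{0,4,7,11\}\ \text{ and }\ \{1,2,9,10\}.
\]
One checks directly that the power sums of the two sets agree in each degree:
\begin{itemize}
\item degree $1$: $22=22$;
\item degree $2$: $186=186$;
\item degree $3$: $1738=1738$.
\end{itemize}
So
\[
P(x)=1+x^4+x^7+x^{11}-x-x^2-x^9-x^{10}
\]
is divisible by $(1-x)^4$, and $P(e(1/N))$, written using $-1=e(1/2)$, is a sum of $8$ $N$-th roots of unity of modulus $O(N^{-4})$.

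\textbf{Padding to ten terms.} To reach exactly $10$ terms, I would append the pair $z^{c}+z^{c+N/2}$ for any integer $c$. This pair sums to zero exactly, since $z^{N/2}=-1$. It changes the term count to $10$ without changing the value of the sum. Alternatively, one could search the same linear system directly for a genuinely $10$-term solution. I would expect such solutions to exist, possibly even achieving order $5$ vanishing, but the padding trick already suffices for the stated bound.

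\textbf{Nonvanishing.} The only point that needs care is that the sum is nonzero. Since $P$ is a nonzero polynomial, it has finitely many roots, so only finitely many roots of unity are zeros of $P$. Hence for all sufficiently large $N$, the number $e(1/N)$ is not a root of $P$, and the sum is nonzero. For the finitely many remaining even $N$, the bound $f(10,N)\ll N^{-4}$ holds trivially after enlarging the implied constant, since any nonzero sum (for example, ten copies of $1$) works there. I expect no real obstacle beyond locating a valid solution of the linear system; the displayed one can be verified by hand.
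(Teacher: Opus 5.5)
Your proof is correct, but it is framed differently from the paper's. The paper expands $\prod_{a\in A}(1-e(a/N))$ with $A=\{-3,-2,-1,1\}$. There the $N^{-4}$ bound, and nonvanishing for $N>3$, come for free from the four factors. The only work is choosing $A$ so that enough subset sums coincide for the sixteen monomials to collapse to $2-x-x^{-2}-x^{-3}+2x^{-5}-x^{-6}$, i.e.\ eight roots of unity counted with multiplicity.

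You instead start from an ideal Prouhet--Tarry--Escott pair, which is precisely the Myerson-style route the paper contrasts itself with. This costs you the power-sum verification and a separate finiteness argument for nonvanishing. In return, it is the template behind Myerson's $O(N^{-k/2})$ bounds whenever ideal PTE solutions of the right size exist, and here it produces eight distinct terms.

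In this instance the two constructions are of the same type. Your polynomial factors as $P(x)=(1-x)(1-x^2)(1-x^3)(1-x^5)$, which is the paper's construction with $A=\{1,2,3,5\}$. This also shows directly that $P(e(1/N))\neq 0$ for every even $N\geq 4$.

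Finally, both arguments really give $f(8,N)\ll N^{-4}$. Your padding by the zero pair $z^c+z^{c+N/2}$ is exactly the step needed to reach ten terms, and the paper's proof leaves it unstated.
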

\begin{proof}
    Consider the set $A=\{ -3,-2,-1,1 \}$, and the sum given by the expansion of
    \begin{align*}
        \prod_{a\in A}(1-e(a/N))=2-e(1/N)-e(-2/N)-e(-3/N)+2e(-5/N)-e(-6/N)..
    \end{align*}
    This is a sum of eight roots of unity, and of order $O(N^{-4})$. The set $A$ was chosen so that $0\notin A$, and $|A \cap (A+'A) |=3$. Here $A+'A$ stands for the multiset $A+'A=\{a+a'|a, a'\in A, a\neq a' \}$. 
\end{proof}

\section{Acknowledgements}
The author wants to thank Christian Bernert for suggesting the problem, together with the reference \cite{benb}.

\end{document}